\documentclass[a4paper,12pt]{article}
\usepackage[utf8]{inputenc}
\usepackage[lmargin=3cm,tmargin=3cm,rmargin=2cm,bmargin=2cm]{geometry}
\usepackage[onehalfspacing]{setspace}
\usepackage[T1]{fontenc}
\usepackage[english]{babel}
\usepackage{fourier}
\usepackage{mathtools}
\usepackage{bm}
\usepackage{esvect}
\usepackage{graphicx,xcolor,comment,enumerate,multirow,multicol,indentfirst,tabu,longtable}
\usepackage{amsmath,amsthm,amsfonts,amssymb,dsfont,mathtools,blindtext}
\usepackage{graphicx,subfigure}
\usepackage{tcolorbox}

\usepackage{hyperref}
\hypersetup{
    colorlinks=true,
    linkcolor=blue,
    filecolor=blue,      
    urlcolor=blue,}


\newtheorem{theorem}{Theorem}[section]

\newtheorem{lemma}{Lemma}[section]
\newtheorem{corollary}{Corollary}[section]

\newcommand{\R}{\mathds{R}}

\title{Analytic conjugation between planar differential systems 
 and potential systems}
\author{F.\,J.\,S.\, NASCIMENTO\footnote{ Geology Collegiate, Federal University of Vale do São Francisco, Senhor do Bonfim, Bhaia, Brazil\\
 Email: \href{mailto:francisco.jsn@univasf.edu.br?subject=&body=}{francisco.jsn@univasf.edu.br} \\
 ORCID:\href{https://orcid.org/0009-0004-9536-7979}{0009-0004-9536-7979}}}
\date{}
\begin{document}
\pagestyle{plain}
\maketitle
\begin{abstract}
   In this article it is proved that an analytical planar vector field with a non-degenerate center at $(0,0)$ is analytically conjugate, in a neighborhood of $(0,0)$, to a Hamiltonian vector field of the form $y\frac{\partial}{\partial x}-V'(x)\frac{\partial}{\partial y}$, where $V$ is an analytic function defined in a neighborhood of the origin such that $V(0)=V'(0)=0$ and $V''(0)>0.$
\end{abstract}

\section{Introduction}
Let $\Omega$ be an open subset of $\R^2$ and $C^\omega(\Omega,\R^d)$ the set of the real analytic functions defined on $\Omega$ with values on $\R^d$, $d\in\{1,2\}$. Let $P,Q\in C^\omega(\Omega,\R)$ and consider the analytic differential system
\begin{equation}\label{eq1}
    \Dot{x}=P(x,y),\quad \Dot{y}=Q(x,y), \quad (x,y)\in \Omega.
\end{equation}
The system (\ref{eq1}) define on $\Omega$ the planar vector field $X=(P,Q)\in C^\omega(\Omega,\R^2)$. In this paper the vector field $X=(P,Q)$ it will often be represented by the differential operator  
\begin{equation}\label{eq2}
    X=P\frac{\partial}{\partial x}+Q\frac{\partial}{\partial y}.
\end{equation}
A point $p\in \Omega$ such that $X(p)=(0,0)$ is called a singular
point of $X$. A singular point $p$ is non-degenerate if
the determinant of Jacobian matrix $DX(p)$ of $X$ at $p$ is non-zero, that is, if $P_x(p) Q_y(p)- P_ y(p) Q_ x(p)\neq 0$. A singular point $p$ is called a center of $X$ if there exists an open
neighborhood $U$ of $p$ such that each solution of (\ref{eq1}) with initial condition on $U-\{p\}$ defines a
periodic orbit surrounding $p$. The largest neighborhood $\mathcal{A}$ with this property is called the period annulus of $p$. Let $p$ be a center of $X$ and let $T(q)$ denote the period of orbit passing through $q\in \mathcal{A}$. The function $q\to T(q)$ is called the period function associated with the center $p$.

Let $\Omega_1$ and $\Omega_2$ be open sets of $\R^2$. The vector fields $X\in C^\omega(\Omega_1,\R^2)$ and $Y\in C^\omega(\Omega_2,\R^2)$ are analytically equivalent (or analytically conjugacy) if exists an analytic diffeomorphism $h:\Omega_1\to\Omega_2$ such that 
\begin{equation}\label{eq3}
    D_qhX(q)=Y(h(p))\quad \text{for every }\quad q\in \Omega_1.
\end{equation}
The  diffeomorphism $h$ maps singular points to singular points, and periodic orbits to periodic orbits, preserved  the period of the periodic orbits. Let $p$ be a singular point $p$ of $X$, we say that $X$ is locally analytically conjugate to a vector field $Y$ if equality (\ref{eq3}) holds on a neighborhood of $p$. In this paper, we shall show the following 
\begin{theorem}\label{teo1}
  Let $\Omega$ be an open subset of \, $\R^2$ such that $(0,0)\in \Omega$ and suppose that the vector field $X\in C^\omega(\Omega,\R^2)$ has a non-degenerate center at $(0,0)$. Then $X$ is analytically conjugate, on a neighborhood of $(0,0)$, to the vector field 
  \begin{equation}\label{eq4}
Y=y\frac{\partial}{\partial x}-V'(x)\frac{\partial}{\partial y}, \end{equation} where $V$ is an analytic function defined in a neighborhood of the origin such that $V(0)=V'(0)=0$ and $V''(0)>0.$
\end{theorem}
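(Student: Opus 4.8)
The strategy is to reach the target field in two stages: first replace $X$, up to an analytic conjugacy, by the Hamiltonian field of an analytic function with a non-degenerate minimum at the origin; then conjugate that Hamiltonian field to a one-dimensional Newtonian system.

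\emph{Stage 1 (reduction to a Hamiltonian field).} Since $(0,0)$ is a center, the eigenvalues of $DX(0,0)$ are $\pm i\omega$ with $\omega>0$, and after a linear change of coordinates we may assume that $DX(0,0)$ is the matrix with first row $(0,1)$ and second row $(-\omega^2,0)$ — the linear part that the target $Y$ will also carry once we take $V''(0)=\omega^2$, so that no rescaling of time (which a conjugacy cannot perform) is ever needed. By the classical Poincaré–Lyapunov theorem for analytic centers, $X$ has a local analytic first integral $H$ near the origin with $H(0,0)=0$; because the level curves of $H$ near $(0,0)$ are the periodic orbits, the origin is a non-degenerate extremum of $H$, which we take to be a minimum. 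From $XH\equiv0$ one gets that $X$ is everywhere proportional to $X_H:=H_y\partial_x-H_x\partial_y$, say $X=\mu X_H$ with $\mu$ analytic and $\mu(0,0)>0$ (the quotient extends analytically to the origin because both fields vanish to first order there with proportional linear parts). Thus $X$ is the Hamiltonian field of $H$ for the area form $\mu^{-1}\,dx\wedge dy$, and the two-dimensional analytic Darboux theorem — here elementary, since $(x,y)\mapsto\bigl(x,\int_0^y\mu(x,s)^{-1}\,ds\bigr)$ already straightens the form — produces analytic coordinates near the origin in which $X=X_H$ with respect to the standard form, $H$ analytic, $H(0,0)=0$, and $H$ having a non-degenerate minimum. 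A linear symplectic change followed by the symplectic scaling $(x,y)\mapsto(\lambda x,\lambda^{-1}y)$ finally normalizes $H$ to $H=\tfrac12 y^2+\tfrac{\omega^2}2 x^2+O(3)$.

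\emph{Stage 2 (reduction to potential form).} First, the period function $c\mapsto T(c)$ of $X_H$ on $\{H=c\}$ is analytic up to $c=0$ with $T(0)=2\pi/\omega$: writing $H=\tfrac12(\xi^2+\eta^2)$ in analytic Morse coordinates turns $X_H$ into $\rho^{-1}(\eta\partial_\xi-\xi\partial_\eta)$ with $\rho$ analytic and $\rho(0,0)\ne0$, whence $T(c)=\int_0^{2\pi}\rho(\sqrt{2c}\cos\theta,\sqrt{2c}\sin\theta)\,d\theta$; the part of $\rho$ odd under $(\xi,\eta)\mapsto(-\xi,-\eta)$ integrates to zero, leaving an analytic function of $c$. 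Next, inverting the Abel equation $T(c)=\sqrt2\int_0^c(c-w)^{-1/2}g(w)\,dw$ (its inversion multiplies the $n$-th Taylor coefficient by a factor of at most polynomial growth, hence preserves radius of convergence) and integrating yields an analytic \emph{even} potential $V$ with $V(0)=V'(0)=0$, $V''(0)=\omega^2$, whose Newtonian system $\ddot u=-V'(u)$ has period function identically $T$. One then constructs the conjugacy $\Phi$ by transport along the flow: fix an analytic arc $\Sigma$ through the origin transverse to $X_H$, an identification $\phi$ of $\Sigma$ with the corresponding section of $Y=y\partial_x-V'(x)\partial_y$ (including a choice of phase), and set $\Phi(\varphi^t_{X_H}(p))=\varphi^t_{Y}(\phi(p))$ for $p\in\Sigma$. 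Since the two period functions agree, $\Phi$ is single-valued on a punctured neighborhood of the origin and conjugates $X_H$ to $Y$ there; moreover it is automatically symplectic, for writing $U=\Phi_1$ and $W=X_HU$ the bracket $\{U,W\}$ is a first integral, hence a function $\kappa(H)$, and equating the enclosed areas of $\{H=c\}$ and $\{\tfrac12 W^2+V(U)=c\}$ forces $\kappa\equiv1$.

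\emph{The main obstacle} is to show that $\Phi$, and $\Phi^{-1}$, extend analytically across the center $(0,0)$. This is genuinely delicate: a map that is analytic on a punctured disk and conjugates two analytic centers with the same linear part need not be analytic at the fixed point — already for the linear rotation, ``rotation by $\alpha(H)$'' is analytic exactly when $\alpha$ is an analytic function of $H$, not merely a continuous one. The only freedom available is the choice of phase in $\phi$, and it must be pinned down so that the square-root singularities coming from the two transversals cancel in $\Phi$. I expect this to require either a complexification argument — realizing $H$ near its Morse point as $pq$ in holomorphic coordinates, so that the periodic orbits become vanishing cycles inside the smooth curves $\{H=c\}\cong\mathds{C}^{\ast}$, the Hamiltonian flow extends holomorphically, and one controls $\Phi$ across the node of $\{H=0\}$ — or, equivalently, recasting Stage 2 as the nonlinear equation $(X_HU)^2+2V(U)=2H$ for an analytic unknown $U$ with $U(0,0)=0$ (with $V$ fixed as above), whose linearization at $U=x$ is the homological operator $R\mapsto 2H_y\,X_HR+2V'(x)\,R$, and solving it by a Lyapunov–Schmidt/implicit-function scheme in which the matching of the period functions is precisely what kills the obstruction. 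Once $\Phi$ is known to be an analytic diffeomorphism near the origin, composing it with the coordinate change of Stage 1 yields the analytic conjugacy between $X$ and $Y$ asserted by the theorem.
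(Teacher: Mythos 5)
Your Stages 1--2 run parallel to the paper's Lemmas \ref{lem1}--\ref{lem3}: reduce $X$ to a rotationally organized normal form, read off the period function as an analytic function of the energy, and invert the Abel operator to manufacture an even analytic potential $V$ with the same period function (your remark that the inversion multiplies the $n$-th coefficient by a factor of polynomial growth is exactly the paper's $b_n=\frac{n\sqrt{2\pi}\,\Gamma(n)}{4\Gamma(n+1/2)}a_n$ with $\Gamma(n)/\Gamma(n+1/2)\sim n^{-1/2}$). The genuine gap is in your last step. You build the conjugacy $\Phi$ by transporting a transversal along the two flows, which only yields an analytic conjugacy on a \emph{punctured} neighborhood, and you yourself name the extension of $\Phi$ and $\Phi^{-1}$ analytically across the center as ``the main obstacle''---but you do not close it: you propose two possible strategies (complexification of the vanishing cycle; a Lyapunov--Schmidt scheme for $(X_HU)^2+2V(U)=2H$) and carry out neither. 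The statement you would need---that two analytic non-degenerate centers with equal period functions are analytically conjugate---is precisely Corollary 4.3 of \cite{manosas2002area}; it is a theorem, not a formality, and your proof is incomplete exactly where all the difficulty of Theorem \ref{teo1} is concentrated.

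The paper sidesteps this obstacle by a different final step: it applies the Poincar\'e normal form of Lemma \ref{lem1} to \emph{both} $X$ and the constructed potential field, obtaining $f(\xi^2+\eta^2)\bigl(\eta\frac{\partial}{\partial \xi}-\xi\frac{\partial}{\partial \eta}\bigr)$ and $g(\xi^2+\eta^2)\bigl(\eta\frac{\partial}{\partial \xi}-\xi\frac{\partial}{\partial \eta}\bigr)$ respectively. Since by Lemma \ref{lem2} the period function determines the profile through $T(E)=\pi\,(F^{-1})'(E)$ with $F(z)=\frac12\int_0^z f(s)\,ds$, equality of the two period functions forces $F=G$, hence $f=g$, so the two normal forms literally coincide and the desired conjugacy is simply the composition of the two normal-form conjugacies. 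All the delicate analyticity at the singular point is thus carried by the quoted normal form theorem rather than by a bare-hands transversal construction. To repair your argument, either prove the analytic extension of $\Phi$ at the origin (essentially reproving the Ma\~{n}osas--Villadelprat rigidity result) or cite it; alternatively, apply your own Stage 1 normalization a second time, to the potential field, and conclude as the paper does.
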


 The vector field (\ref{eq4}) defines the Hamiltonian system 
 \begin{equation}\label{eq5}
\Dot{x} =y, \quad\Dot{y}=-V'(x).
\end{equation}  This system is called a potential system and has been the subject of study by several researchers. Among the issues addressed, two stand out, and both are related to the period function associated with the center of system (\ref{eq5}). The first question is dedicated to the study of the monotonicity of the period function. This issue is addressed in several articles, for example, \cite{chicone1987monotonicity}, \cite{gasull1997period}, \cite{chavarriga1999survey}, \cite{sfecci2015isochronous}. The second question concerns the possibility of constructing a potential function V from a positive function T. This question is called the inverse problem and has been addressed in several articles, for example,  \cite{urabe1964relations}, \cite{urabe1961potential}, \cite{alfawicka1984inverse}, \cite{alfawicka84inverse}, \cite{manosas2008two},  \cite{kamimura2010global}, \cite{kamimura2013global}. In \cite{chicone1987monotonicity} Chicone questions under what conditions a vector field $X$ with a center at $(0,0)$ is conjugate to a Hamiltonian vector field of type (\ref{eq4}). The theorem (\ref{teo1}) is an answer to the question proposed by Chicone for the particular case in which $X$ is analytic and the center at $(0,0)$ is non-degenerate.
 
 \section{Proof of Theorem \ref{teo1}}

The central idea of the proof consists of constructing a vector field of the type (\ref{eq4}), from the period function of the vector field $X$, in such a way that the period function of $Y$ is equal to the period function of $X$. The proof will be divided into some lemmas.

 \begin{lemma}\label{lem1}
     Let $X=P\frac{\partial}{\partial u}+Q\frac{\partial}{\partial v}$ be an analytical vector field with a non-degenerate center at $(0,0)$. Then $X$ is analytically conjugate, on a neighborhood of $(0,0)$, to the vector field 
  \begin{equation}\label{eq6}
X(\xi,\eta)=f(\xi^2+\eta^2)\bigg(\eta\frac{\partial}{\partial \xi}-\xi\frac{\partial}{\partial \eta}\bigg), \end{equation} where $f$ is an analytic function defined in a neighborhood of the origin such that $f(0)>0.$
 \end{lemma}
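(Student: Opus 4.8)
The plan is to use the classical theory of non-degenerate centers, which forces the linear part to have purely imaginary eigenvalues, together with a normal-form/linearization argument in the analytic category. First I would recall that if $X$ has a non-degenerate center at $(0,0)$, then the eigenvalues of $DX(0,0)$ are $\pm i\beta$ with $\beta\neq 0$: the determinant is nonzero, and if the eigenvalues had nonzero real part the origin would be a focus, not a center, while a nonzero pair would have to be purely imaginary since the orbits are closed. After a linear change of coordinates (and a constant time rescaling absorbed into $f$), I may assume $DX(0,0)=\beta\begin{pmatrix}0&1\\-1&0\end{pmatrix}$, i.e. the linear part is exactly the rotation field $\beta\bigl(v\,\partial_u - u\,\partial_v\bigr)$.

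Next I would invoke the analytic version of the Poincaré center theorem (equivalently, analytic linearizability of an analytic center): an analytic vector field with a non-degenerate center is analytically \emph{orbitally} equivalent to its linear part, and in fact — since the orbits are genuine periodic orbits and one can use the period function or the first integral — one can do better. Concretely, an analytic center admits an analytic first integral $H$ with a nondegenerate minimum at the origin; by the analytic Morse lemma $H$ is, in suitable analytic coordinates $(\xi,\eta)$, equal to $\xi^2+\eta^2$ (up to a positive constant absorbed into the coordinates). In these coordinates the level sets of $\xi^2+\eta^2$ are exactly the periodic orbits, so $X$ must be tangent to the circles centered at the origin; hence $X = g(\xi,\eta)\bigl(\eta\,\partial_\xi - \xi\,\partial_\eta\bigr)$ for some analytic function $g$. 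Finally, because $X$ is invariant under the flow in a way that respects the rotational symmetry of the level sets — more precisely, because $X$ is tangent to each circle and $H=\xi^2+\eta^2$ is a first integral, the function $g$ is constant along each orbit, i.e. $g$ depends only on $\xi^2+\eta^2$; writing $g = f(\xi^2+\eta^2)$ gives the desired form, with $f(0) = g(0,0) = \beta > 0$ after possibly reversing orientation.

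The main obstacle — and the step requiring the most care — is producing the analytic first integral $H$ and the analytic change of coordinates simultaneously, i.e. justifying that the conjugacy (not merely an orbital equivalence) can be taken analytic. Orbital equivalence only matches trajectories as sets; to get a true conjugacy satisfying \eqref{eq3} one must also match the time parametrization, which is where the function $f$ (rather than a mere reparametrization) enters. I would handle this by first passing to coordinates in which $H=\xi^2+\eta^2$ (analytic Morse lemma applied to the analytic first integral of the center, whose existence is the Poincaré–Lyapunov analytic center theorem), then observing that in those coordinates the vector field is automatically of the form $g(\xi,\eta)(\eta\,\partial_\xi-\xi\,\partial_\eta)$ with $g$ analytic and nonvanishing near the origin, and then showing $g$ is a function of $\xi^2+\eta^2$ alone by differentiating $H$ along $X$ (which is zero) — this pins down $g$'s dependence, since $X H=0$ is automatic but the finer claim follows from expressing $g$ via the ratio $X/(\text{rotation field})$ and using that both $X$ and the rotation field are tangent to the same foliation by circles while $X$'s flow has a well-defined period on each circle. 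One subtlety to flag: a priori $g$ could depend on the angular variable; ruling this out uses that the period of $X$ around the circle $\xi^2+\eta^2=r^2$ is $\oint d\theta / g$, and analyticity plus the center structure forces $g$ to be angle-independent after the Morse normalization — this is the point I would write out most carefully.
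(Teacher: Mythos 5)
The paper disposes of this lemma in one line by citing the Poincar\'e normal form theorem for non-degenerate centers (Marde\v{s}i\'c et al.), so your attempt to give a self-contained proof is going beyond what the paper does. Your first steps are sound: the eigenvalues of $DX(0,0)$ must be purely imaginary, the Poincar\'e--Lyapunov theorem supplies an analytic first integral $H$ with a non-degenerate minimum at the origin, the analytic Morse lemma puts $H$ into the form $\xi^2+\eta^2$, and tangency to the level circles then forces $X=g(\xi,\eta)\bigl(\eta\,\partial_\xi-\xi\,\partial_\eta\bigr)$ with $g$ analytic and non-vanishing.

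The gap is exactly at the step you flagged, and the argument you sketch there does not close it. After the Morse normalization, $g$ need \emph{not} be a function of $\xi^2+\eta^2$ alone: the field $X=(1+\xi)\bigl(\eta\,\partial_\xi-\xi\,\partial_\eta\bigr)$ has the first integral $\xi^2+\eta^2$ already in Morse normal form, has a non-degenerate analytic center at the origin, and yet $g=1+\xi$ depends on the angle. Your stated reason --- that the period $\oint d\theta/g$ is well defined on each circle and ``analyticity plus the center structure'' force angle-independence --- is a non sequitur: the period integral being a function of $r$ alone is automatic and says nothing about $g$ being constant on each circle. What is actually needed is a \emph{further} change of coordinates that moves points along each circle so as to uniformize the time parametrization, e.g.\ a new angular variable $\tilde\theta(r,\theta)$ chosen so that $d\tilde\theta/dt$ is constant on each orbit; the real work, and the content of the theorem the paper cites, is proving that this fiberwise reparametrization extends analytically across the origin (where the circles degenerate to a point). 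Without that construction and its analyticity check, your proof of the lemma is incomplete.
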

\begin{proof}
    It is an immediate consequence of the Poincaré Normal Form Theorem (see \cite{mardesic1995linearization}).
\end{proof}

Note that (\ref{eq6}) is a Hamiltonian vector field with Hamiltonian function $H(\xi,\eta)=F(\xi^2+\eta^2)$, where $F$ is the analytic function defined by
\begin{equation}\label{eq7}
    F(z)=\frac{1}{2}\int_0^zf(s)ds.
\end{equation}
Therefore, the periodic orbits of (\ref{eq6}) are contained in the level curves $H(\xi,\eta)=E.$ 
\begin{lemma}\label{lem2}
    Let $E\geq0$, the period function $T(E)$ of (\ref{eq6}) parameterized by $H(\xi,\eta)=E$ is the analytic function given by 
    \begin{equation}\label{eq08}
        T(E)=\frac{\pi}{F'(F^{-1}(E))}=\pi\frac{d}{dE}F^{-1}(E).
    \end{equation}
\end{lemma}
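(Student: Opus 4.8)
The plan is to integrate the flow of (\ref{eq6}) explicitly in polar coordinates and then re-parametrize the resulting period by the energy level $E$.

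First I would pass to polar coordinates $\xi = r\cos\theta$, $\eta = r\sin\theta$, so that $\xi^2+\eta^2 = r^2$. Writing the differential system associated with (\ref{eq6}) as $\dot\xi = f(r^2)\eta$, $\dot\eta = -f(r^2)\xi$, one computes $r\dot r = \xi\dot\xi + \eta\dot\eta = 0$ and $r^2\dot\theta = \xi\dot\eta - \eta\dot\xi = -r^2 f(r^2)$. Hence along every orbit $r$ is constant and $\dot\theta = -f(r^2)$. Since $f(0)>0$ and $f$ is continuous, after shrinking the neighborhood of the origin we may assume $f>0$ on it; then the orbit through a point at distance $r$ from the origin is the circle of radius $r$, traversed at constant angular speed $f(r^2)$, so its period is $T = 2\pi/f(r^2)$.

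Next I would rewrite $r$ in terms of $E$. The orbit in question lies on the level curve $H(\xi,\eta) = F(r^2) = E$. Since $F(0)=0$ and $F'(0) = \tfrac12 f(0) > 0$, the map $F$ is an analytic diffeomorphism of a neighborhood of $0$ onto a neighborhood of $0$, so $r^2 = F^{-1}(E)$ is well defined and $F^{-1}$ is analytic. Substituting and using $f = 2F'$ yields
\[
T(E) = \frac{2\pi}{f(F^{-1}(E))} = \frac{\pi}{F'(F^{-1}(E))},
\]
while the inverse function theorem, $(F^{-1})'(E) = 1/F'(F^{-1}(E))$, gives the second equality in (\ref{eq08}). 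Analyticity of $T$ follows from that of $F^{-1}$ and $F'$ together with $F'(F^{-1}(E)) = \tfrac12 f(F^{-1}(E)) \neq 0$; in particular the formula extends to $E=0$, where $T(0) = 2\pi/f(0)$, so it holds for all $E\geq 0$ in the relevant range.

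I do not expect a genuine obstacle here: the polar-coordinate computation is routine, and the only points needing care are shrinking the domain so that $f$ (hence $F^{-1}$ and $T$) is well defined and positive, and checking that the stated expression extends analytically across the center value $E=0$.
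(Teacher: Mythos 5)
Your proposal is correct and follows essentially the same route as the paper: pass to polar coordinates to see that orbits are circles traversed at angular speed $f(r^2)$, obtain the period $2\pi/f(r^2)$, and substitute $r^2=F^{-1}(E)$ using $f=2F'$. You are in fact slightly more careful than the paper on one point, correctly noting $F'(0)=\tfrac12 f(0)>0$ where the paper writes $F'(0)=f(0)$.
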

\begin{proof}
    In polar coordinates (\ref{eq6}) becomes  \begin{equation}\label{eq8}
X(r,\theta)=f(r^2)\frac{\partial}{\partial \theta},
\end{equation} where $r^2=\xi^2+\eta^2$ and 
 $\theta=\arctan\big(\frac{\eta}{\xi}\big)$. Thus, the origin is a center with the periodic orbits inside the circles $\xi^2+\eta^2=r^2.$ Therefore the period of the periodic orbit of (\ref{eq6}) inside the circle of radius $r$ is given by 
  \begin{equation}\label{eq10}
        \Hat{T}(r)=\frac{2\pi}{f(r^2)}.
    \end{equation}
    Let $E>0$ be such that $H(\xi,\eta)=F(\xi^2+\eta^2)=E$. Since $F'(0)=f(0)>0$ we have that $F$ has an analytic inverse in a neighborhood of zero. Therefore, $r^2=\xi^2+\eta^2=F^{-1}(E)$ in a neighborhood of zero. Substituting $r=\sqrt{F^{-1}(E)}$ in (\ref{eq10}) we get
    \begin{equation}\label{eq11}
       T(E)=\Hat{T}(\sqrt{F^{-1}(E)})=\frac{2\pi}{f(F^{-1}(E))}\overset{(\ref{eq7})}{=}\frac{\pi}{F'(F^{-1}(E))}.
    \end{equation}
    Note that, by definition, $T(E)$ is analytic in a neighborhood of zero with $T(0)=\pi/F'(0)>0$.
\end{proof}
\begin{lemma}\label{lem3}
 Let $T(E)$ be the analytic function definite in (\ref{eq11}). Then exist an analytic function $V$, defined in a neighborhood of zero, such that $V(0)=V'(0)=0$ and $V''(0)>0$. Furthermore, the period function of the Hamiltonian vector field, defined in a neighborhood of $(0,0)$, by \begin{equation}\label{eq12}
Y(x,y)=y\frac{\partial}{\partial x}-V'(x)\frac{\partial}{\partial y}, \end{equation} is equal to $T(E)$.
\end{lemma}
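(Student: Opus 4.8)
The plan is to reverse-engineer the potential $V$ from the prescribed period function $T(E)$ using the classical Abel-integral-equation machinery for potential systems. For the Hamiltonian vector field $Y=y\frac{\partial}{\partial x}-V'(x)\frac{\partial}{\partial y}$ with Hamiltonian $\mathcal{H}(x,y)=\frac{1}{2}y^2+V(x)$, the period of the orbit on the level set $\mathcal{H}=E$ is given by the well-known formula $T(E)=\sqrt{2}\int_{x_-(E)}^{x_+(E)}\frac{dx}{\sqrt{E-V(x)}}$, where $x_-(E)<0<x_+(E)$ are the two roots of $V(x)=E$. The standard approach is to seek $V$ of the form determined by a new variable: write $x=x(w)$ implicitly through $V(x)=w^2/2$ for $x>0$ and a companion branch for $x<0$. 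Introducing the function $g$ by demanding that the branches combine as in Urabe's theory, one arrives at the relation $T(E)=\sqrt{2}\frac{d}{dE}\int_0^{E}\frac{g(w)\,dw}{\sqrt{E-w}}$ (an Abel transform), so that $g$ is recovered from $T$ by the inverse Abel transform $g(w)=\frac{1}{\sqrt{2}\,\pi}\int_0^{w}\frac{T(E)\,dE}{\sqrt{w-E}}$, after suitable normalization; this $g$ is analytic because $T$ is analytic and the Abel transform and its inverse preserve analyticity (one substitutes $E=w t$ and differentiates under the integral sign to see the analyticity explicitly).

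First I would set up the precise correspondence between $V$ and the pair of inverse branches. Since we want $V(0)=V'(0)=0$, $V''(0)>0$, the function $V$ is, near $0$, a product of $x^2$ with an analytic unit; so $\sqrt{2V(x)}=x\,u(x)$ with $u$ analytic and $u(0)=\sqrt{V''(0)}>0$ on the right branch, and analogously on the left. The cleanest route: define $\varphi(E):=x_+(E)-x_-(E)$ (the "width" of the potential well at energy $E$ as a function of $E$), show that $T$ and $\varphi$ are related by the Abel-type identity obtained from the period integral after the substitution $V(x)=E\sin^2\psi$ on each branch, invert it to get $\varphi$ from $T$, verify $\varphi$ is analytic with $\varphi(0)=0$ and $\varphi'(0)>0$ (this last using $T(0)>0$ from Lemma \ref{lem2}), and finally reconstruct $V$: one is free to choose, say, a symmetric well $V(x)=\bar V(x^2)$, in which case $x_+=-x_-=\varphi/2$ and $V$ is determined as the analytic inverse of $E\mapsto \varphi(E)/2$ composed appropriately. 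Concretely, if $x_+(E)=\varphi(E)/2=:\rho(E)$ with $\rho$ analytic, $\rho(0)=0$, $\rho'(0)>0$, then $\rho$ has an analytic inverse $\rho^{-1}$ near $0$ and we set $V(x)=\rho^{-1}(x)$ for $x\ge 0$, extended evenly; analyticity of $V$ at $0$ follows since $\rho^{-1}(x)=x^2\cdot(\text{analytic unit})$ because $\rho(E)=\sqrt{E}\cdot(\text{analytic unit in }\sqrt E)$ forces $\rho^{-1}$ to be even and analytic in $x$.

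After that, the remaining work is bookkeeping: I would verify directly that the $V$ so constructed is genuinely analytic in a full neighborhood of $0$ (not merely on $x\ge 0$), that $V(0)=V'(0)=0$ and $V''(0)=1/\rho'(0)^2\cdot(\text{const})>0$, and — the key consistency check — that the period function of $Y=y\frac{\partial}{\partial x}-V'(x)\frac{\partial}{\partial y}$ computed from the period integral equals the given $T(E)$; this is immediate once the Abel inversion has been carried out faithfully, since the Abel transform is injective. I would also note $T(0)=\pi/F'(0)>0$ matches the harmonic-oscillator period $2\pi/\sqrt{V''(0)}$, pinning down $V''(0)=4\pi^2/T(0)^2$, which is consistent with $V''(0)=(F'(0))^2\cdot 4/\pi^2\cdot\pi^2/\,\dots$ — in any case a positive number, as required.

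\medskip

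\noindent\textbf{Main obstacle.} The delicate point is establishing \emph{analyticity} of the reconstructed potential at the boundary energy $E=0$ (equivalently at $x=0$), i.e. showing that the inverse Abel transform of an analytic $T$ with $T(0)>0$ produces a width function $\varphi(E)$ that is analytic in $E$ and vanishes to exactly first order, so that the induced $V$ is analytic and even in $x$ with $V''(0)>0$. The half-integer power $\sqrt{E}$ appearing in the inner integrand means one must track carefully that all the square roots recombine into integer powers of $x$; the standard trick is the substitution $E=w t,\ t\in[0,1]$, which converts the Abel integral into $\int_0^1(\cdots)(w,t)\,dt$ with integrand jointly analytic in $w$ near $0$, whence analyticity in $w$ is clear, and then a parity argument in the passage $w\mapsto x$ via $w^2=2V$. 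I expect this boundary-regularity argument, rather than the (formal) Abel inversion itself, to consume most of the proof.
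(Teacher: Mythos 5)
Your proposal follows essentially the same route as the paper: both reconstruct an even potential $V$ by Abel-inverting the period integral $\hat T(E)=2\sqrt2\int_0^E (V^{-1})'(s)(E-s)^{-1/2}\,ds$ and obtain analyticity of $V$ at the origin from the parity argument $V(x)=[\varphi^{-1}(x)]^2$ with $\varphi$ odd, analytic, and $\varphi'(0)>0$ (note only that the half-width $x_+(E)$ is analytic in $\sqrt E$, not in $E$ as your "main obstacle" paragraph momentarily states). The boundary-regularity step you defer is exactly what the paper settles by carrying out the inversion coefficient-wise: writing $F^{-1}(E)=\sum_{n\ge1}a_nE^n$ and setting $b_n=\frac{n\sqrt{2\pi}\,\Gamma(n)}{4\Gamma(n+1/2)}a_n$, so that the Beta-function identity sends the Abel transform of $\sum b_nE^{(2n-1)/2}$ back to $\pi(F^{-1})'(E)=T(E)$ while the Gamma-quotient bound guarantees the series for $\varphi$ has positive radius of convergence.
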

\begin{proof}
    Since $F^{-1}(E)$ is analytic in the neighborhood of zero, with $F^{-1}(0)=0$, there exists a sequence of real numbers $(a_n)_{n \geq1}$ such that
    \begin{eqnarray}
    F^{-1}(E)=\sum_{n\geq1}a_nE^n.
\end{eqnarray} Therefore,
\begin{eqnarray}\label{eq14}
\big(F^{-1}\big)'(E)=\sum_{n\geq1}na_nE^{n-1}.
\end{eqnarray} Since $\big(F^{-1}\big)'(0)=1/F'(0)$, we have that $a_1=1/F'(0)>0.$ Let $(b_n)_{n\geq1})$ be the sequence of real numbers defined by 
\begin{eqnarray}
b_n=\frac{n\sqrt{2\pi}\Gamma(n)}{4\Gamma(n+1/2)}a_n,
\end{eqnarray} where $\Gamma$ is Euler's gamma function. Since $$\lim\limits_{n\to\infty}\frac{\Gamma(n)}{\Gamma(n+1/2)}=0,$$ there is a constant $M>0$ such that $|b_n|\leq nM|a_n|.$ Therefore, the function $\varphi(E)$ defined by \begin{eqnarray}\varphi(z)=\sum_{n\geq1}b_nz^{2n-1}\end{eqnarray} is analytic in a neighborhood of zero , with $\varphi(0)=0$ and $\varphi'(0)=b_1=\sqrt{2}a_1/2>0.$ Let $\zeta(E)$ be defined in a neighborhood of zero by
\begin{eqnarray}\label{zeta}
\zeta(E)=\varphi(\sqrt{E})=\sum_{n\geq1}b_nE^{(2n-1)/2}.
\end{eqnarray} Since $$\zeta'(E)=\frac{\varphi'(E)}{2\sqrt{E}},$$ it follows that $\lim\limits_{E\to 0^+} \zeta'( E)=+\infty$ and, consequently, $\zeta(E)$ is invertible in a neighborhood of zero. Let $V$ be the function defined in a neighborhood of zero by $V(\tilde{x})=\zeta^{-1}(\tilde{x})$. By definition, $V$ is analytic in a neighborhood of zero with $V(0)=V'(0)=0$ and $V''(0)>0$. In fact, since $\varphi'(0)>0$, $\varphi(z)$ is invertible in a neighborhood of zero. Therefore, $x=\zeta(E)=\varphi(\sqrt{E})$ implies that $\zeta^{-1}(x)=[\varphi^{-1 }(x)]^2$. So $V(x)=\zeta^{-1}(x)=[\varphi^{-1}(x)]^2$ is analytic in a neighborhood from zero. Furthermore, it follows from the definition of $V$ that $V(0)=V'(0)=0$ and $$V''(0)=2(\varphi^{-1})'(0)= 2/\varphi'(0)=2/b_1>0.$$
Note that, by definition, $V$ is an even function. Let $Y$ be the Hamiltonian vector field defined by \begin{equation}\label{eq18}
Y(x,y)=y\frac{\partial}{\partial x}-V'(x)\frac{\partial}{\partial y}. 
\end{equation}  Note that, by definition, $Y$ is analytic in a neighborhood of $(0,0)$ and has a non-degenerate center at $(0,0)$. Therefore, the periodic orbits of (\ref{eq18}) are contained in the level curves $H(x,y)=E,$ where $H$ is the Hamiltonian function defined in a neighborhood of $(0,0)$ by 
\begin{eqnarray}\label{eq19}
    H(x,y)=\frac{y^2}{2}+V(x).
\end{eqnarray}

Let $\hat{T}(E)$  be the period of the periodic orbit with $H(x,y) = E$ of the vector field (\ref{eq18}). By (\ref{eq19}) we have to 
\begin{eqnarray}
    y=\pm \sqrt{2(E-V(x))}.
\end{eqnarray}
Then 
\begin{eqnarray*}
    \hat{T}&=&2\int_{V_-^{-1}(E)}^{V_+^{-1}(E)}\frac{dx}{y}=2\int_{V_-^{-1}(E)}^{V_+^{-1}(E)}\frac{dx}{\sqrt{2(E-V(x))}}\\
    &=&\sqrt{2}\int_{0}^{V_+^{-1}(E)}\frac{dx}{\sqrt{E-V(x)}}-\sqrt{2}\int_{0}^{V_-^{-1}(E)}\frac{dx}{\sqrt{E-V(x)}},
\end{eqnarray*} where $V_-^{-1}$ and $V_+^{-1}$ denote the inverse of $V$ at $x<0$ and $x>0$ respectively. The change of coordinates $x=V_+^{-1}(E)$ and $x=V_-^{-1}(E)$ in the first and second integral above respectively yield
\begin{eqnarray}
   \hat{T}(E)=\sqrt{2}\int_{0}^{E}\frac{\big(V_+^{-1}(s)-V_-^{-1}(s)\big)'ds}{\sqrt{E-s}}.
\end{eqnarray} Since $V$ is even, we have that $V_-^{-1}(E)=-V_+^{-1}(E)$ and, consequently, $V_+^{-1}(E)-V_-^{-1}(E)=2V_+^{-1}(E)=2V^{-1}(E)$. Therefore, the period function $\hat{T}(E)$ of (\ref{eq18}) parameterized by the energy levels $H=E$ is the analytical function defined by
\begin{equation}\label{eq20}
\Hat{T}(E)=2\sqrt{2}\int_{0}^{E}\frac{(V^{-1})'(s)ds}{\sqrt{E-s}}.
 \end{equation} 
 By definition, $$V^{-1}(E)=\varphi(\sqrt{E})\overset{(\ref{zeta})}{=}\sum_{n\geq1}b_nE^{(2n-1)/2}$$  and, therefore, $$(V^{-1})'(E)=\sum_{n\geq2}\frac{(2n-1)}{2}b_nE^{(2n-3)/2}.$$ Substituting the series of $(V^{-1})'(E)$ into (\ref{eq20}), we obtain

 \begin{eqnarray*}
	\Hat{T}(E)&=&2\sqrt{2}\sum_{n\geq2}\frac{(2n-1)}{2}b_n\int_{0}^{E}\frac{s^{(2n-3)/2} ds}{\sqrt{(E-s)}}\\
&=&2\sqrt{2}\sum_{n\geq2}\frac{(2n-1)}{2}b_n\int_{0}^{1}\frac{(Et)^{(2n-3)/2}E dt}{\sqrt{(E-Et)}}\\
	&=&2\sqrt{2}\sum_{n\geq2}\frac{(2n-1)}{2}b_nE^{n-1}\int_{0}^{1}\frac{t^{(2n-3)/2}dt}{\sqrt{(1-t)}}\\
 &=&2\sqrt{2}\sum_{n\geq1}\frac{(2n-1)}{2}b_nE^{n-1}2\frac{\sqrt{\pi}\Gamma(n-1/2)}{\Gamma(n)}\\
  &=&2\sqrt{2}\sum_{n\geq1}b_n\frac{\sqrt{\pi}\Gamma(n+1/2)}{\Gamma(n)}E^{n-1}.\\
\end{eqnarray*} 
By definition, $b_n=\frac{n\sqrt{2\pi}\Gamma(n)}{4\Gamma(n+1/2)}a_n.$ Then $$b_n\frac{\sqrt{\pi}\Gamma(n+1/2)}{\Gamma(n)}E^{n-1}=\frac{n\sqrt{2\pi}\Gamma(n)}{4\Gamma(n+1/2)}a_n\cdot\frac{\sqrt{\pi}\Gamma(n+1/2)}{\Gamma(n)}=\frac{\sqrt{2}\pi na_n}{4}.$$ Therefore 
$$\Hat{T}(E)=2\sqrt{2}\sum_{n\geq1}\frac{\sqrt{2}\pi na_n}{4}E^{n-1}=\pi\sum_{n\geq1}na_nE^{n-1}\overset{(\ref{eq14})}{=}\pi(F^{-1})'(E)=\frac{\pi}{F'(F^{-1}(E))}\overset{(\ref{eq08})}{=}T(E).$$
\end{proof}
\subsection{Conclusion of the proof of Theorem \ref{teo1}.}
By construction, the vector field (\ref{eq18}) has a non-degenerate center at $(0,0)$. Then, by lemma \ref{lem1} (\ref{eq18}) is analytically conjugate, on a neighborhood of $(0,0)$, to the vector field 
  \begin{equation}\label{eq21}
Y(\xi,\eta)=g(\xi^2+\eta^2)\bigg(\eta\frac{\partial}{\partial \xi}-\xi\frac{\partial}{\partial \eta}\bigg), \end{equation} where $g$ is an analytic function defined in a neighborhood of the origin such that $g(0)>0.$ By lemma \ref{lem2}, the period function $\Hat{T}(E)$ of (\ref{eq21}) parameterized by $H(\xi,\eta)=E$ is the analytic function given by 
    \begin{equation}
        \Hat{T}(E)=\frac{\pi}{G'(G^{-1}(E))}=\pi\frac{d}{dE}G^{-1}(E),
    \end{equation} where $G$ is the analytic function defined by
\begin{equation}\label{eq23}
    G(z)=\frac{1}{2}\int_0^zg(s)ds.
\end{equation}
By construction, $\Hat{T}(E)=T(E)=\pi\frac{d}{dE}F^{-1}(E)$ and therefore $$\pi\frac{d}{dE}G^{-1}(E)=\pi\frac{d}{dE}F^{-1}(E).$$
Since $G^{-1}(0)=F^{-1}(0)=0$, we have that $$G^{-1}(E)=\int_0^ E\frac{d}{dE}G^{-1}(s)ds=\int_0^E\frac{d}{dE}F^{-1}(s)ds=F^{-1}(E).$$ Therefore $G$ coincides with $F$ in a neighborhood of zero. This implies that the vector fields (\ref{eq6}) and (\ref{eq21}) coincide in a neighborhood of $(0,0)$. Therefore, the vector field $X$ from the definition of theorem (\ref{teo1}) is analytically conjugate, in a neighborhood of $(0,0)$, to the Hamiltonian vector field defined in (\ref{eq18}).\qed

In (\cite{manosas2002area}, Corollary 4.3) it was proved that two Hamiltonian systems with non-degenerate centers at $(0,0)$ are analytically conjugate if and only if both centers have the same period function. The idea of proof the theorem \ref{teo1} was based on this result.

A linear application $R:\R^2\to\R^2$ is called a linear involution over $\mathbb{R}^2$ if $R\neq id$ and $R^2=id$. A vector field $X:\Omega\subseteq\R^2\to\R^2$ is reversible with respect to $R$ or $R$-reversible in $\Omega$ if $R\circ X(q) =-X\circ R(q) \text{ for all }q\in\Omega.$
\begin{corollary}
Let $X=P\frac{\partial}{\partial u}+Q\frac{\partial}{\partial v}$ be an analytic vector field in a neighborhood of $(0,0)$ with $X(0,0)=(0,0)$ and $P_x(0,0) Q_y(0,0)- P_ y(0,0) Q_ x(0,0)>0$. Suppose $X$ is $R$-reversible, with $R(x, y) = (u, -v)$. Then $X$ is analytically conjugate, on a neighborhood of $(0,0)$, to the vector field $Y=y\frac{\partial}{\partial x}-V'(x)\frac{\partial}{\partial y}$, where $V$ is an analytic function defined in a neighborhood of the origin such that $V(0)=V'(0)=0$ and $V''(0)>0.$
\end{corollary}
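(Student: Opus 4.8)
The plan is to reduce the corollary to Theorem~\ref{teo1} by proving that, under these hypotheses, $X$ has a non-degenerate center at $(0,0)$; once this is established, Theorem~\ref{teo1} applies word for word. Here $R(x,y)=(x,-y)$ is the reflection across the $x$-axis, whose fixed-point set is $\mathrm{Fix}(R)=\{y=0\}$.

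First I would read off the linearization. Since $R$ is linear and $R(0,0)=(0,0)$, differentiating the reversibility relation $R\circ X=-X\circ R$ at the origin gives $R\,A=-A\,R$ with $A:=DX(0,0)$. Because $R=\mathrm{diag}(1,-1)$, this anticommutation forces the diagonal of $A$ to vanish, i.e.\ $P_x(0,0)=Q_y(0,0)=0$; writing $b:=P_y(0,0)$ and $c:=Q_x(0,0)$ we get $\det A=-bc$, which is positive by hypothesis, so $bc<0$ and the eigenvalues of $A$ are $\pm i\sqrt{-bc}$ — purely imaginary and nonzero. Next I would note that the origin is monodromic: in polar coordinates $\dot\theta=\rho^{-2}(x\dot y-y\dot x)=\rho^{-2}\bigl(cx^2-by^2+O(\rho^3)\bigr)$, and since $bc<0$ the form $cx^2-by^2$ is sign-definite, so $\dot\theta$ has a fixed sign (say $\dot\theta>0$) on a punctured neighborhood $V$ of $(0,0)$. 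In particular no orbit approaches $(0,0)$ with a limiting direction, so $(0,0)$ is a focus or a center, and every orbit close enough to the origin meets the $x$-axis near the origin.

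Then I would use reversibility to exclude the focus. If $\gamma$ is the solution with $\gamma(0)=(x_0,0)\in\mathrm{Fix}(R)$, then $t\mapsto R\gamma(-t)$ solves the same equation with the same value at $t=0$, so by uniqueness $\gamma(-t)=R\gamma(t)$ for all $t$: the orbit is symmetric across the $x$-axis. For $|x_0|$ small, the bound $\dot\theta\geq\delta>0$ on $V$ together with an elementary Gr\"onwall estimate for $d\rho/d\theta=\dot\rho/\dot\theta$ (here $\dot\rho=O(\rho)$) keeps the orbit inside $V$ until its angle has increased by $\pi$, i.e.\ until it returns to the $x$-axis, say $\gamma(t_1)=(x_1,0)$ with $t_1>0$ minimal. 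The symmetry then forces $\gamma(-t_1)=R\gamma(t_1)=\gamma(t_1)$, so $\gamma$ is periodic with period $2t_1$. Hence every point of the $x$-axis near the origin lies on a periodic orbit, and since every orbit near the origin meets the $x$-axis near the origin, every such orbit is periodic; thus $(0,0)$ is a center, non-degenerate because $\det A=-bc\neq0$. Theorem~\ref{teo1} then gives the analytic conjugacy of $X$, on a neighborhood of $(0,0)$, to a vector field $Y=y\frac{\partial}{\partial x}-V'(x)\frac{\partial}{\partial y}$ with $V(0)=V'(0)=0$ and $V''(0)>0$, which is exactly the assertion.

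I expect the only real obstacle to be the return-to-the-axis claim in the third step: one has to be careful that the orbit through $(x_0,0)$ actually comes back to $\mathrm{Fix}(R)$ without first leaving the neighborhood where monodromy holds. This is precisely what the sign-definiteness of $\dot\theta$ and the radial estimate are for; everything else — the linear algebra of Step~1, the uniqueness/symmetry argument, and the appeal to Theorem~\ref{teo1} — is routine.
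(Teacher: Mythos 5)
Your proof is correct and follows the same route as the paper: reduce to Theorem \ref{teo1} by showing that the hypotheses force a non-degenerate center at the origin. The only difference is that the paper obtains the center claim by citing (\cite{teixeira2001center}, Theorem 1), whereas you prove it directly — anticommutation of $DX(0,0)$ with $R$ giving purely imaginary eigenvalues, sign-definiteness of $\dot\theta$ giving monodromy, and the reflection symmetry of orbits through $\mathrm{Fix}(R)$ forcing periodicity — and that self-contained argument is also sound.
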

\begin{proof}
    Under these conditions $X$ has a non-degenerate center at $(0,0)$, see (\cite{teixeira2001center}, Theorem 1).
\end{proof}
 The global equivalence between vector fields of the form $X(u,v)=v\partial_u+f(u,v^2/2)\partial_v$ and the form $Y(x,y)=y\frac{\partial}{\partial x}-V'(x)\frac{\partial}{\partial y}$ was studied in \cite{ragazzo2012scalar} and in the Ph.D. Thesis (in Portuguese) of the author (FJSN). The global version of theorem \ref{teo1} is part of a manuscript by the author in collaboration with C. Grota Ragazzo. 

\medskip

\bibliographystyle{unsrt}
\bibliography{bibliografia}  
\end{document}